\theoremstyle{definition}
\theoremstyle{remark}
\numberwithin{equation}{section}
\numberwithin{equation}{section} 
\newtheorem{thm}{Theorem}[section]
\newtheorem{cor}[thm]{Corollary}
\newtheorem{prop}[thm]{Proposition}
\theoremstyle{definition}
\theoremstyle{remark}
\title{} \author{} \date{}
\begin{document}

\title{} \author{} \date{}
\markboth{Marko Kosti\'c, Daniel Velinov}{Vector-valued almost automorphic distributions, ...}\setcounter{page}{1}
\thispagestyle{empty}

\vspace{0.8cm}
\title[VECTOR-VALUED ALMOST AUTOMORPHIC (ULTRA)DISTRIBUTIONS]{VECTOR-VALUED ALMOST AUTOMORPHIC DISTRIBUTIONS AND VECTOR-VALUED ALMOST AUTOMORPHIC ULTRADISTRIBUTIONS}

\author{Marko Kosti\' c}
\address{Faculty of Technical Sciences,
University of Novi Sad,
Trg D. Obradovi\' ca 6, 21125 Novi Sad, Serbia}
\email{marco.s@verat.net}

\author{Daniel Velinov}
\address{Department for Mathematics, Faculty of Civil Engineering, Ss. Cyril and Methodius University, Skopje,
Partizanski Odredi
24, P.O. box 560, 1000 Skopje, Macedonia}
\email{velinovd@gf.ukim.edu.mk}


\begin{abstract}
In this paper, we introduce the notions of  a vector-valued almost automorphic distribution and a vector-valued almost automorphic ultradistribution, working in the framework of complex Banach spaces. We prove several structural characterizations for the introduced classes.
\\[2mm] {\it AMS Mathematics Subject Classification $(2010)$}: 46F05, 42A75, 11K70, 34A27, 35B15
\\[1mm] {\it Key words and phrases:} vector-valued almost automorphic distributions, vector-valued almost automorphic ultradistributions,
Banach spaces

\end{abstract}
\maketitle

\section{Introduction and preliminaries}

The notion
of a scalar-valued  almost  automorphic  function was introduced by S. Bochner
\cite{bochner} in 1962. The
first  systematic
study of
almost  automorphic functions
on
topological groups was carried out by
W. A. Veech \cite{veech}-\cite{veech-prim} during the period 1965-1967. The reader may consult the monographs \cite{diagana} by T. Diagana, \cite{gaston} by G. M. N'Gu\' er\' ekata and \cite{nova-mono} by M. Kosti\' c  for the basic information about almost automorphic functions, asymptotically almost automorphic functions, their generalizations and various applications to abstract integro-differential equations in Banach spaces.

The notion of a scalar-valued almost automorphic distribution was introduced by C. Bouzar and Z. Tchouar \cite{buzar-tress} in 2017, while the notion of a scalar-valued almost automorphic Colombeau generalized function was introduced by C. Bouzar, M. T. Khalladi and F. Z. Tchouar \cite{buzar-tres} in 2015 (see also the pioneering researches of I. Cioranescu \cite{ioana}-\cite{ioana-tres} and M. C. G\' omez-Collado \cite{gomezinjo} for almost periodic classes). As mentioned in the abstract, the main aim of this paper is to introduce the notions of  a vector-valued almost automorphic distribution and a vector-valued almost automorphic ultradistribution in a complex Banach space. We provide several structural profilations for the introduced classes.

The organization and main ideas of paper are given as follows. In Subsection 1.1, we remind ourselves of the elementary facts about Komatsu's approach to vector-valued ultradistributions. Section 2 is written in an expository manner, and its aim is to transfer the results of C. Bouzar and Z. Tchouar \cite{buzar-tress} to vector-valued case. In Section 3, we introduce the notion of a vector-valued almost automorphic ultradistribution and further analyze this concept. In such a way, we continue the research study of vector-valued almost periodic ultradistributions, carried out recently by the first named author \cite{mat-bilten}.

We use the standard notation throughout the paper. By $(X,\|\cdot \|)$ we denote a complex Banach space. The symbol $C_{b}({\mathbb R} : X)$ and $C(K:X),$ where $K$ is a non-empty compact subset of ${\mathbb R},$ stand for the spaces consisting of all bounded continuous functions ${\mathbb R} \mapsto X$ and all continuous functions $K\mapsto X,$ respectively. Both spaces are endowed with sup-norm.
Let $f : {\mathbb R} \rightarrow X$ be continuous. Then we say that
$f(\cdot)$ is almost automorphic, a.a. for short, iff for every real sequence $(b_{n})$ there exist a subsequence $(a_{n})$ of $(b_{n})$ and a map $g : {\mathbb R} \rightarrow X$ such that
\begin{align*}
\lim_{n\rightarrow \infty}f\bigl( t+a_{n}\bigr)=g(t)\ \mbox{ and } \  \lim_{n\rightarrow \infty}g\bigl( t-a_{n}\bigr)=f(t),
\end{align*}
pointwise for $t\in {\mathbb R}.$ If this is the case, we have that $f\in C_{b}({\mathbb R} : X)$ and the limit function $g(\cdot)$ is bounded on ${\mathbb R}$ but not necessarily continuous on ${\mathbb R}.$ The vector space consisting of all almost automorphic functions is denoted by $AA({\mathbb R} :X).$ Owing to Bochner's criterion, any almost periodic function has to be almost automorphic; the converse statement is not true, however \cite{diagana}.

In this paper, we will use the following notion of a Stepanov $p$-almost automorphic function (see e.g. the paper \cite{fatajou} by S. Fatajou, N. Van Minh, G.\,M. N'Gu\'er\'ekata and A. Pankov): Let $1\leq p<\infty.$ A function $f\in L_{loc}^{p}({\mathbb R}:X)$ is said to be Stepanov $p$-almost automorphic, $S^{p}$-almost automorphic or $S^{p}$-a.a. shortly, iff for
every real sequence $(a_{n}),$ there exists a subsequence $(a_{n_{k}})$
and a function $g\in L_{loc}^{p}({\mathbb R}:X)$ such that
\begin{align*}
\lim_{k\rightarrow \infty}\int^{t+1}_{t}\Bigl \| f\bigl(a_{n_{k}}+s\bigr) -g(s)\Bigr \|^{p} \, ds =0
\end{align*}
and
\begin{align*}
\lim_{k\rightarrow \infty}\int^{t+1}_{t}\Bigl \| g\bigl( s-a_{n_{k}}\bigr) -f(s)\Bigr \|^{p} \, ds =0
\end{align*}
for each $ t\in {\mathbb R}.$ It is checked at once that the $S^{p}$-almost automorphy of $f(\cdot)$ implies the almost automorphy of the mapping
$
\hat{f} : {\mathbb R} \rightarrow L^{p}([0,1] :X)
$ defined by $\hat{f}(t):=f(t+\cdot),$ $t\in {\mathbb R},$ with the limit function being $g(\cdot)(s):=g(s+\cdot)$ for a.e. $s\in [0,1],$ so that any $S^{p}$-almost automorphic function $f(\cdot)$ has to be $S^{p}$-bounded ($1\leq p<\infty$); see \cite{nova-mono} for the notion. The vector space consisting of all $S^{p}$-almost automorphic functions is denoted by $ AAS^{p}({\mathbb R} : X).$
If $1\leq p<q<\infty$ and $f(\cdot)$ is Stepanov $q$-almost automorphic, then $f(\cdot)$ is Stepanov $p$-almost automorphic.
If $f(\cdot)$ is an almost automorphic function,
then $f(\cdot)$ is $S^p$-almost automorphic, for any $p \in [1,\infty).$ The converse statement is false, however.

\subsection{Vector-valued ultradistributions}\label{vvult}

There are a great number of different approaches to the theory of ultradistributions. For the sake of brevity, in this paper we will always follow Komatsu's approach, with
the sequence $(M_p)$ of positive real numbers
satisfying $M_0=1$ and the following conditions:
(M.1): $M_p^2\leq M_{p+1} M_{p-1},\;\;p\in\mathbb{N},$
(M.2): $
M_p\leq AH^p\sup_{0\leq i\leq p}M_iM_{p-i},\;\;p\in\mathbb{N},$ for some $A,\ H>1,$
(M.3'): $
\sum_{p=1}^{\infty}\frac{M_{p-1}}{M_p}<\infty .
$
Any use of the condition\\ (M.3):
$\sup_{p\in\mathbb{N}}\sum_{q=p+1}^{\infty}\frac{M_{q-1}M_{p+1}}{pM_pM_q}<\infty,$
which is slightly stronger than (M.3$'$), will be explicitly accented.

It is well-known that
the Gevrey sequence  $(p!^s)$ satisfies the above conditions ($s>1$). Define
$m_p:=\frac{M_p}{M_{p-1}}$, $p\in\mathbb{N}.$

The space of Beurling,
resp., Roumieu ultradifferentiable functions, is
defined by
$
\mathcal{D}^{(M_p)}
:=\text{indlim}_{K\Subset\Subset\mathbb{R}}\mathcal{D}^{(M_p)}_{K},
$
resp.,
$
\mathcal{D}^{\{M_p\}}
:=\text{indlim}_{K\Subset\Subset\mathbb{R}}\mathcal{D}^{\{M_p\}}_{K},
$
where
$
\mathcal{D}^{(M_p)}_K:=\text{projlim}_{h\to\infty}\mathcal{D}^{M_p,h}_{K},$
resp.,
$\mathcal{D}^{\{M_p\}}_K:=\text{indlim}_{h\to 0}\mathcal{D}^{M_p,h}_{K},
$
\begin{align*}
\mathcal{D}^{M_p,h}_K:=\bigl\{\phi\in C^\infty(\mathbb{R}): \text{supp} \phi\subseteq K,\;\|\phi\|_{M_p,h,K}<\infty\bigr\}
\end{align*}
and
\begin{align*}
\|\phi\|_{M_p,h,K}:=\sup\Biggl\{\frac{h^p|\phi^{(p)}(t)|}{M_p} : t\in K,\;p\in\mathbb{N}_0\Biggr\}.
\end{align*}
In the sequel, the asterisk $*$ is used to denote both, the Beurling case $(M_p)$ or
the Roumieu case $\{M_p\}$.
The space
consisted of all continuous linear functions from
$\mathcal{D}^*$ into $X,$ denoted by
$\mathcal{D}^{\prime *}(X):=L(\mathcal{D}^*:X),$
is said to be the space of all $X$-valued ultradistributions of $\ast$-class. We also need the notion of space
$\mathcal{E}^*(X),$ defined as  $
\mathcal{E}^{\ast}(X)
:=\text{indlim}_{K\Subset\Subset\mathbb{R}}\mathcal{E}^{\ast}_{K}(X),
$
where in Beurling case
$
\mathcal{E}^{(M_p)}_{K}(X):=\text{projlim}_{h\to\infty}\mathcal{E}^{M_p,h}_{K}(X),$
resp., in Roumieu case
$\mathcal{E}^{\{M_p\}}_{K}(X):=\text{indlim}_{h\to 0}\mathcal{E}^{M_p,h}_{K}(X),
$ and
\begin{align*}
\mathcal{E}^{M_p,h}_{K}(X):=\Biggl\{\phi\in C^\infty(\mathbb{R} :X): \sup_{p\geq 0}\frac{h^{p}\|\phi^{(p)}\|_{C(K : X)}}{M_{p}}<\infty\Biggr\}.
\end{align*}
The space consisted of all linear continuous mappings $\mathcal{E}^*({\mathbb C})\rightarrow X$ is denoted by $\mathcal{E}^{\prime *}(X);$
$\mathcal{E}^{\prime *}:=\mathcal{E}^{\prime *}({\mathbb C}).$
Let us recall \cite{k91} that an entire function of the form
$P(\lambda)=\sum_{p=0}^{\infty}a_p\lambda^p$,
$\lambda\in\mathbb{C}$, is of class $(M_p)$, resp., of
class $\{M_p\}$, if there exist $l>0$ and $C>0$, resp., for every
$l>0$ there exists a constant $C>0$, such that $|a_p|\leq Cl^p/M_p$,
$p\in\mathbb{N}$.
The corresponding ultradifferential operator
$P(D)=\sum_{p=0}^{\infty}a_p D^p$ is of class $(M_p)$, resp., of
class $\{M_p\}$.
The convolution of Banach space valued
ultradistributions and scalar-valued ultradifferentiable functions of the same class will be taken in the sense of considerations given on page 685 of \cite{k82}. We have that, for every
$f\in \mathcal{D}^{\prime *}(X)$ and $\varphi \in \mathcal{D}^*,$ $f\ast \varphi \in \mathcal{E}^*(X)$ as well as that the linear mapping $\varphi \mapsto \cdot \ast \varphi : \mathcal{D}^{\prime *}(X) \rightarrow \mathcal{E}^*(X)$ is continuous. The convolution of an $X$-valued
ultradistribution $f(\cdot)$ and an element $g\in {\mathcal E}^{\prime \ast },$ defined by the identity \cite[(4.9)]{k82}, is an $X$-valued
ultradistribution and the mapping $g \ast \cdot :  \mathcal{D}^{\prime *}(X) \rightarrow \mathcal{D}^{\prime *}(X)$ is continuous. Put $\langle T_{h},\varphi \rangle :=\langle T, \varphi (\cdot -h) \rangle,$ $T\in \mathcal{D}^{\prime *}(X),$ $\varphi \in {\mathcal D}^{\ast}$ ($h>0$).

If $(M_{p})$ satisfies (M.1), (M.2) and (M.3), then
$$
P_{l}(x)=\bigl( 1+x^{2} \bigr)\prod_{p\in {\mathbb N}}\Biggl(1+\frac{x^{2}}{l^{2}m_{p}^{2}}\Biggr),
$$
resp.
$$
P_{r_{p}}(x)=\bigl( 1+x^{2} \bigr)\prod_{p\in {\mathbb N}}\Biggl(1+\frac{x^{2}}{r_{p}^{2}m_{p}^{2}}\Biggr),
$$
defines an ultradifferential operator of class $(M_p)$, resp., of
class $\{M_p\}$. Here, $(r_{p})$ is a sequence of positive real numbers tending to infinity. The family consisting of all such sequences will be denoted by ${\mathrm R}$ henceforth. For more details, see \cite{k91}-\cite{k82}.

The spaces of tempered ultradistributions of Beurling,
resp., Roumieu type, are defined by S. Pilipovi\' c \cite{pilip} as duals of the corresponding test spaces
$$
\mathcal{S}^{(M_p)}:=\text{projlim}_{h\to\infty}\mathcal{S}^{M_p,h},\
\mbox{ resp., }\ \mathcal{S}^{\{M_p\}}:=\text{indlim}_{h\to 0}\mathcal{S}^{M_p,h},
$$
where
\begin{gather*}
\mathcal{S}^{M_p,h}:=\bigl\{\phi\in C^\infty(\mathbb{R}):\|\phi\|_{M_p,h}<\infty\bigr\}\ \ (h>0),
\\
\|\phi\|_{M_p,h}:=\sup\Biggl\{\frac{h^{\alpha+\beta}}{M_\alpha M_\beta}(1+t^2)^{\beta/2}|\phi^{(\alpha)}(t)|:t\in\mathbb{R},
\;\alpha,\;\beta\in\mathbb{N}_0\Biggr\}.
\end{gather*}
A continuous linear mapping $
\mathcal{S}^{(M_p)} \rightarrow X,$ resp., $
\mathcal{S}^{\{M_p\}}\rightarrow X,$ is said to be an $X$-valued tempered ultradistribution of Beurling, resp., Roumieu type. The space consisting of all vector-valued tempered ultradistributions of Beurling, resp., Roumieu type,
will be denoted by $\mathcal{S}^{\prime (M_p)}(X),$ resp. $\mathcal{S}^{\prime \{M_p\} }(X);$ the common shorthand will be $\mathcal{S}^{\prime \ast}(X).$ It is well known that
$\mathcal{S}^{\prime (M_p)}(X) \subseteq \mathcal{D}^{\prime (M_p)}(X)$, resp. $\mathcal{S}^{\prime\{M_p\} }(X) \subseteq \mathcal{D}^{\prime\{M_p\} }(X) .$

\section{Almost automorphy of vector-valued distributions}\label{collado-automorphic}

We will use the following elementary notion (see L. Schwartz \cite{sch16} for more details). The symbol ${\mathcal D}={\mathcal D}({\mathbb R})$ denotes the Schwartz space of test functions, the space of rapidly decreasing functions ${\mathcal S}={\mathcal S}({\mathbb R})$ carries the usual Fr\' echet topology and ${\mathcal E}={\mathcal E}({\mathbb R}),$ the space of all infinitely differentiable functions, carries with the usual Fr\' echet topology. By
${\mathcal D}^{\prime}(X),$ ${\mathcal S}^{\prime}(X)$ and ${\mathcal E}^{\prime}(X)$ we denote the spaces of all linear continuous mappings ${\mathcal D} \rightarrow X,$ ${\mathcal S} \rightarrow X$ and ${\mathcal E} \rightarrow X,$ respectively.

Our first task in this section will be to verify that all structural results proved by C. Bouzar and F. Z. Tchouar \cite{buzar-tress} continue to hold in vector-valued case.
Let $1\leq p \leq \infty$. Then ${\mathcal D}_{L^{p}}(X)$ denote the vector space consisting of all infinitely differentiable functions $f: {\mathbb R} \rightarrow X$ satisfying that for each number $j\in {\mathbb N}_{0}$ we have $f^{(j)}\in L^{p}({\mathbb R} : X).$ The Fr\' echet topology on ${\mathcal D}_{L^{p}}(X)$ is induced by the following system of seminorms
$$
\|f\|_{k}:=\sum_{j=0}^{k}\bigl\|f^{(j)}\bigr\|_{L^{p}({\mathbb R})},\quad f\in {\mathcal D}_{L^{p} (X) } \ \ \bigl( k\in {\mathbb N}\bigr).
$$
In the case that $X={\mathbb C},$ then the above space is simply denoted by ${\mathcal D}_{L^{p}}.$
A continuous linear mapping $f : {\mathcal D}_{L^{1}} \rightarrow X$ is said to be a bounded $X$-valued distribution; the space consisting of such vector-valued distributions will be denoted by ${\mathcal D}^{\prime}_{L^{1}}(X)$. By $B^{\prime}(X)$ we denote the space of such distributions; endowed with the strong topology, $B^{\prime}(X)$ becomes a complete locally convex space. For every $f\in B^{\prime}(X)$, we have that $f_{| {\mathcal S}} : {\mathcal S} \rightarrow X$ is a tempered $X$-valued distribution.

Set
$$
\mathcal{E}_{AA}(X):=\Biggl\{ \phi \in \mathcal{E}(X) : \phi^{(i)} \in AA({\mathbb R} : X)\mbox{ for all }i\in {{\mathbb N}_{0}}\Biggr\}.
$$
Since, for every $\phi \in \mathcal{E}_{AA}(X),$ we have $\phi \in AA({\mathbb R} : X)\subseteq C_{b}({\mathbb R} : X)$ and
$$
\Biggl\|\int^{\infty}_{-\infty}\phi(t) \varphi(t)dt\Biggr\|\leq \|\phi\|_{L^{\infty}({\mathbb R})}\|\varphi\|_{L^{1}},\quad \varphi \in {\mathcal D}_{L^{1}},
$$
the mapping $\phi \mapsto \int^{\infty}_{-\infty}\phi(t) \varphi(t)dt,$ $\varphi \in {\mathcal D}_{L^{1}}$ is linear and continuous so that
$\mathcal{E}_{AA}(X) \subseteq {\mathcal D}^{ \prime  }_{L^{1}}( X).$ Using the fact that the first derivative of a differentiable almost automorphic function is almost automorphic iff it is uniformly continuous \cite{diagana}, it can be easily verified that we have $\mathcal{E}_{AA}(X)=\mathcal{E}(X) \cap AA({\mathbb R} : X);$ furthermore, $\mathcal{E}_{AA}(X) \ast L^{1}({\mathbb R}) \subseteq \mathcal{E}_{AA}(X)$
and $\mathcal{E}_{AA}(X)$ is a closed subspace of ${\mathcal D}_{L^{\infty}}(X)$ (see \cite[Proposition 5]{buzar-tress}). We have, actually, that $\mathcal{E}_{AA}(X)$ is the space of those elements $f(\cdot)$ from ${\mathcal D}_{L^{\infty}}(X)$
for which $f\ast \varphi \in AA({\mathbb R} : X),$ $\varphi \in {\mathcal D}$  see \cite[Corollary 1]{buzar-tress}. For any vector-valued distribution $T\in {\mathcal D}^{\prime}(X),$ we define $\tau_{h}T:=T_{h}$ by $\langle T_{h} ,\varphi \rangle :=\langle T, \varphi(\cdot -h)\rangle,$ $\varphi \in {\mathcal D}$
($h \in {\mathbb R}$).

The following result is crucial:

\begin{thm}\label{kruc-auto} (see \cite[Theorem 1]{buzar-tress})
Let $T\in {\mathcal D}^{\prime}_{L^{1}}(X).$ Then the following assertions are equivalent:
\begin{itemize}
\item[\emph{(i)}] $T \ast \varphi \in AA({\mathbb R} : X),$ $\varphi \in {\mathcal D}.$
\item[\emph{(ii)}] There exist an integer $k\in {\mathbb N}$ and almost automorphic functions $f_{j}(\cdot) : {\mathbb R} \rightarrow X$ ($1\leq j\leq k$) such that $T=\sum_{j=0}^{k}f_{j}^{(j)}.$
\end{itemize}
\end{thm}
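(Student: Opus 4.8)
The overall strategy I would follow is to transfer the scalar argument of Bouzar and Tchouar, the only genuinely new point being that almost automorphy, its closedness under uniform limits, and its stability under convolution with $L^{1}({\mathbb R})$ all remain valid for $X$-valued functions. The implication (ii) $\Rightarrow$ (i) is the easy one: for $\varphi \in {\mathcal D}$ one has $T\ast \varphi=\sum_{j=0}^{k}f_{j}^{(j)}\ast \varphi=\sum_{j=0}^{k}f_{j}\ast \varphi^{(j)},$ and since $f_{j}$ is bounded and $\varphi^{(j)}\in L^{1}({\mathbb R}),$ a routine dominated-convergence argument along the defining sequences of an almost automorphic $f_{j}$ shows $AA({\mathbb R}:X)\ast L^{1}({\mathbb R})\subseteq AA({\mathbb R}:X);$ as $AA({\mathbb R}:X)$ is a vector space, $T\ast \varphi \in AA({\mathbb R}:X).$

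For (i) $\Rightarrow$ (ii) the plan is the classical parametrix method. Since $T\in {\mathcal D}^{\prime}_{L^{1}}(X)$ is continuous on the Fr\'echet space ${\mathcal D}_{L^{1}},$ there are $k\in {\mathbb N}$ and $C>0$ with $\|\langle T,\psi \rangle \|\leq C\sum_{j=0}^{k}\|\psi^{(j)}\|_{L^{1}({\mathbb R})}$ for all $\psi \in {\mathcal D}_{L^{1}};$ in particular $T$ has finite order $\leq k,$ and $T\ast w$ is a well-defined element of $C_{b}({\mathbb R}:X)$ for every compactly supported $w\in C^{k}({\mathbb R}).$ I would then take $P(D):=(1-D^{2})^{N}$ with $N\geq k+1,$ let $E$ be its fundamental solution (the $N$-th convolution power of $\tfrac12 e^{-|t|},$ which is of class $C^{k},$ integrable, and smooth off the origin), pick $\chi \in {\mathcal D}$ with $\chi \equiv 1$ near $0,$ and set $u:=\chi E$ and $\omega:=P(D)u-\delta.$ Then $u$ is a compactly supported $C^{k}$-function and $\omega \in {\mathcal D}$ (it vanishes near $0,$ where $\chi E=E,$ vanishes outside $\operatorname{supp}\chi,$ and is smooth in between), so that $P(D)u=\delta+\omega.$

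The key step will be to prove that $T\ast u\in AA({\mathbb R}:X).$ Mollifying $u$ produces $\varphi_{n}\in {\mathcal D}$ supported in a fixed compact set with $\sum_{j=0}^{k}\|\varphi_{n}^{(j)}-u^{(j)}\|_{L^{1}({\mathbb R})}\to 0,$ whence, by the estimate above and translation invariance of the seminorms, $\sup_{t\in {\mathbb R}}\|(T\ast \varphi_{n})(t)-(T\ast u)(t)\|\to 0.$ Since each $T\ast \varphi_{n}$ is almost automorphic by (i) and $AA({\mathbb R}:X)$ is closed in $C_{b}({\mathbb R}:X),$ we obtain $T\ast u\in AA({\mathbb R}:X);$ likewise $T\ast \omega\in AA({\mathbb R}:X)$ directly from (i). Convolving $\delta=P(D)u-\omega$ with $T$ and commuting $P(D)$ past the convolution then gives
\begin{align*}
T=P(D)(T\ast u)-T\ast \omega=\sum_{j=0}^{2N}c_{j}(T\ast u)^{(j)}-T\ast \omega,
\end{align*}
with $c_{j}$ the coefficients of $P(D)$ and $c_{0}=1;$ setting $f_{0}:=T\ast u-T\ast \omega$ and $f_{j}:=c_{j}\,T\ast u$ for $1\leq j\leq 2N$ gives $T=\sum_{j=0}^{2N}f_{j}^{(j)}$ with each $f_{j}\in AA({\mathbb R}:X),$ i.e. the required representation with $k=2N.$

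I expect the only delicate point to be the parametrix bookkeeping: building $u$ with just enough regularity and verifying that the hypothesis (i), stated only for $\varphi \in {\mathcal D},$ propagates to the non-smooth kernel $u$ — this is exactly where the finite order of the bounded distribution $T$ enters. The auxiliary facts used (that $AA({\mathbb R}:X)$ is a sup-norm-closed subspace of $C_{b}({\mathbb R}:X),$ that it is stable under convolution with $L^{1}({\mathbb R}),$ and that convolution commutes with constant-coefficient differential operators) are classical and pass to the Banach-space-valued setting unchanged.
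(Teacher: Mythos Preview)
Your argument is correct and is precisely the parametrix method of Bouzar and Tchouar \cite{buzar-tress} carried over to the $X$-valued setting, which is exactly what the paper intends: the theorem is stated with a bare citation to \cite[Theorem 1]{buzar-tress} and no separate proof, the surrounding text merely asserting that the scalar argument transfers verbatim. In other words, you have written out the proof the paper only alludes to, and the key vector-valued ingredients you identify (closedness of $AA({\mathbb R}:X)$ under uniform limits and stability under convolution with $L^{1}({\mathbb R})$) are the same ones the paper invokes in the paragraph preceding the statement.
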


It is said that a distribution $T\in {\mathcal D}^{\prime}_{L^{1}}(X)$ is almost automorphic iff $T$ satisfies any of the above two equivalent conditions. By $B^{\prime}_{AA}(X)$ we denote the space consisting of all almost automorphic distributions. The space  $B^{\prime}_{AA}(X)$ is closed under differentiation and \cite[Proposition 6]{buzar-tress} continue to hold in vector-valued case. This is also the case with the assertions of
\cite[Proposition 7, Proposition 8, Theorem 2, Proposition 9, Proposition 10]{buzar-tress}, so that we have the following theorem:

\begin{thm}\label{prepisivanje}
\begin{itemize}
\item[\emph{(i)}] Let $T\in {\mathcal D}^{\prime}_{L^{1}}(X).$ Then $T$ is almost automorphic iff for every real sequence $(b_{n}),$ there exist a subsequence $(a_{n})$ of $(b_{n})$ and a vector-valued distribution $S \in {\mathcal D}^{\prime}(X)$ such that $\lim_{n\rightarrow \infty}T_{a_{n}}=S$ in ${\mathcal D}^{\prime}(X)$ and $\lim_{n\rightarrow \infty}S_{-a_{n}}=T$ in ${\mathcal D}^{\prime}(X)$ iff there exists a sequence of almost automorphic functions converging to $T$ in ${\mathcal D}^{\prime}_{L^{1}}(X)$ iff for every real sequence $(b_{n}),$ there exists a subsequence $(a_{n})$ of $(b_{n})$ such that $\lim_{l\rightarrow \infty}\lim_{k\rightarrow \infty}\tau_{-a_{l}}\tau_{a_{n}}T=T$
in ${\mathcal D}^{\prime}(X).$
\item[\emph{(ii)}] Let $f\in  AAS^{p}({\mathbb R} : X)$  for some $p\in [1,\infty).$ Then the regular distribution associated to $f(\cdot)$ is almost automorphic.
\end{itemize}
\end{thm}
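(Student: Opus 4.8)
The plan is to deduce (i) by a line-by-line transcription of \cite[Proposition 7, Proposition 8, Theorem 2, Proposition 9, Proposition 10]{buzar-tress} into the Banach-space setting, organizing the four mutually equivalent conditions around the structural description $T=\sum_{j=0}^{k}f_{j}^{(j)}$ with $f_{j}\in AA(\mathbb R:X)$ furnished by Theorem \ref{kruc-auto}. From this form, Bochner's diagonal procedure applied to the finitely many functions $f_{0},\dots,f_{k}$ produces, out of any real sequence $(b_{n})$, a subsequence $(a_{n})$ and bounded limits $g_{0},\dots,g_{k}$ with $f_{j}(\cdot+a_{n})\to g_{j}$ and $g_{j}(\cdot-a_{n})\to f_{j}$ pointwise; setting $S:=\sum_{j=0}^{k}g_{j}^{(j)}$ and testing against $\varphi\in\mathcal D$, the dominated convergence theorem for the Bochner integral (using $\|f_{j}\|_{\infty}<\infty$ and $\varphi^{(j)}\in L^{1}$) gives $T_{a_{n}}\to S$ and $S_{-a_{n}}\to T$ in $\mathcal D'(X)$, which is the translation-compactness condition; the implication back to $T\ast\varphi\in AA(\mathbb R:X)$ uses $(T\ast\varphi)(t+a_{n})=\langle T_{a_{n}},\varphi(t-\cdot)\rangle$ and continuity of translation on $\mathcal D'(X)$. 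The approximation condition is obtained by mollification: $T\ast\rho_{n}=\sum_{j}f_{j}\ast\rho_{n}^{(j)}$ is a finite sum of convolutions of a.a. functions with $L^{1}$-functions, hence a.a., and one checks $T\ast\rho_{n}\to T$ in $\mathcal D'_{L^{1}}(X)$; its converse uses that convolution with a fixed $\varphi\in\mathcal D$ sends convergent sequences in $\mathcal D'_{L^{1}}(X)$ to uniformly convergent sequences of functions, together with the sup-norm closedness of $AA(\mathbb R:X)$ in $C_{b}(\mathbb R:X)$. Finally, the iterated-limit condition follows from the translation-compactness condition by first letting the inner index tend to infinity (continuity of $\tau_{-a_{l}}$ gives $\tau_{-a_{l}}\tau_{a_{k}}T\to\tau_{-a_{l}}S$) and then the outer one; throughout, the only modification of \cite{buzar-tress} is the replacement of $|\cdot|$ by $\|\cdot\|_{X}$, and every tool (Bochner integration and its dominated convergence theorem, Bochner's double-sequence lemma, closedness of $AA(\mathbb R:X)$) is available verbatim.

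For (ii), fix $f\in AAS^{p}(\mathbb R:X)$. First I would check that the regular distribution $T_{f}$ belongs to $\mathcal D'_{L^{1}}(X)$: since $f$ is $S^{p}$-bounded, $M:=\sup_{c}\int_{c}^{c+1}\|f(s)\|^{p}\,ds<\infty$, and for $\varphi\in\mathcal D_{L^{1}}$ one bounds $\bigl\|\int_{\mathbb R}f\varphi\bigr\|$ by $\sum_{n\in\mathbb Z}\bigl(\int_{n}^{n+1}\|f\|^{p}\bigr)^{1/p}\bigl(\int_{n}^{n+1}|\varphi|^{p'}\bigr)^{1/p'}\le M^{1/p}\sum_{n}\sup_{[n,n+1]}|\varphi|\le M^{1/p}\bigl(\|\varphi\|_{L^{1}}+\|\varphi'\|_{L^{1}}\bigr)$, so $T_{f}$ is continuous on $\mathcal D_{L^{1}}$. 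It then remains to verify condition (i) of Theorem \ref{kruc-auto}, that is, $T_{f}\ast\varphi\in AA(\mathbb R:X)$ for every $\varphi\in\mathcal D$. Given $(b_{n})$, the $S^{p}$-almost automorphy of $f$ supplies $(a_{n})$ and $g\in L^{p}_{loc}(\mathbb R:X)$ with $\int_{c}^{c+1}\|f(a_{n}+s)-g(s)\|^{p}\,ds\to0$ and $\int_{c}^{c+1}\|g(s-a_{n})-f(s)\|^{p}\,ds\to0$ for every $c\in\mathbb R$. Writing $(T_{f}\ast\varphi)(t+a_{n})=\int_{\mathbb R}f(v+a_{n})\varphi(t-v)\,dv$ and $(T_{g}\ast\varphi)(t)=\int_{\mathbb R}g(v)\varphi(t-v)\,dv$, I would split the integral over $t-\mathrm{supp}\,\varphi$ into the finitely many unit subintervals meeting it and apply Hölder's inequality on each, bounding $\|(T_{f}\ast\varphi)(t+a_{n})-(T_{g}\ast\varphi)(t)\|$ by $\|\varphi\|_{L^{p'}}$ times a finite sum of terms $\bigl(\int_{c}^{c+1}\|f(a_{n}+v)-g(v)\|^{p}\,dv\bigr)^{1/p}\to0$; the symmetric computation gives $(T_{g}\ast\varphi)(t-a_{n})\to(T_{f}\ast\varphi)(t)$. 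Hence $T_{f}\ast\varphi\in AA(\mathbb R:X)$ and $T_{f}$ is an almost automorphic distribution.

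I expect the only genuine difficulty to be the bookkeeping in (ii): the splitting of $\int_{\mathbb R}f(v+a_{n})\varphi(t-v)\,dv$ into unit pieces must be controlled uniformly on compact $t$-sets and with a number of pieces not depending on $n$, and this is exactly why the $S^{p}$-almost automorphy hypothesis is imposed with the convergence holding over every unit interval $[c,c+1]$ rather than merely over $[0,1]$; once this is respected, the estimate is a routine application of Hölder's inequality. In (i) the one point to keep in mind is that the diagonal subsequence must be extracted once and then reused throughout the chain of equivalences, but no idea beyond those already present in \cite{buzar-tress} is required.
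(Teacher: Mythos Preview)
Your proposal is correct and follows exactly the approach the paper intends: the paper gives no proof of this theorem beyond the remark that \cite[Proposition 7, Proposition 8, Theorem 2, Proposition 9, Proposition 10]{buzar-tress} ``continue to hold in vector-valued case,'' and your outline is precisely a careful transcription of those arguments with $|\cdot|$ replaced by $\|\cdot\|_{X}$ and Lebesgue tools replaced by their Bochner analogues. In fact you supply considerably more detail than the paper does (notably the explicit verification that $T_{f}\in\mathcal D'_{L^{1}}(X)$ via $S^{p}$-boundedness and the Sobolev-type bound $\sum_{n}\sup_{[n,n+1]}|\varphi|\le\|\varphi\|_{L^{1}}+\|\varphi'\|_{L^{1}}$), so nothing is missing.
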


\section{Almost automorphy of vector-valued ultradistributions}\label{collado-automorphic-ultra}

For any $h>0,$ we set
$$
{\mathcal D}_{L^{1}}\bigl((M_{p}),h\bigr):=\Biggl\{ f\in {\mathcal D}_{L^{1}} \ ; \ \|f\|_{1,h}:=\sup_{p\in {\mathbb N}_{0}}\frac{h^{p}\|f^{(p)}\|_{1}}{M_{p}}<\infty \Biggr\} .
$$
Then $({\mathcal D}_{L^{1}}((M_{p}),h),\| \cdot \|_{1,h})$ is a Banach space and the space of all $X$-valued bounded Beurling ultradistributions of class $(M_{p})$, resp., $X$-valued bounded Roumieu ultradistributions of class $\{M_{p}\}$,
is defined as the space consisting of all linear continuous mappings from ${\mathcal D}_{L^{1}}((M_{p})),$ resp., $
{\mathcal D}_{L^{1}}(\{M_{p}\}),$ into $X,$ where
$$
{\mathcal D}_{L^{1}}\bigl((M_{p})\bigr):=\text{projlim}_{h\rightarrow +\infty}{\mathcal D}_{L^{1}}\bigl((M_{p}),h\bigr),
$$
resp.,
$$
{\mathcal D}_{L^{1}}\bigl(\{M_{p}\}\bigr):=\text{indlim}_{h\rightarrow 0+}{\mathcal D}_{L^{1}}\bigl((M_{p}),h\bigr).
$$
These spaces, equppied with the strong topologies, will be shortly denoted by $
{\mathcal D}_{L^{1}}^{ \prime }((M_{p}) : X),$ resp., $
{\mathcal D}_{L^{1}}^{ \prime }(\{M_{p}\} : X).$ It is well known that ${\mathcal D}^{(M_{p})},$ resp. ${\mathcal D}^{\{M_{p}\}},$ is a dense subspace of
${\mathcal D}_{L^{1}}((M_{p}) ),$ resp., $
{\mathcal D}_{L^{1}}(\{M_{p}\} ),$ as well as that  ${\mathcal D}_{L^{1}}((M_{p}) )\subseteq
{\mathcal D}_{L^{1}}(\{M_{p}\} )$. Since $\|\varphi\|_{1,h}\leq \|\varphi\|_{M_p,h}$ for any $\varphi \in \mathcal{S}^{(M_p)}$ and $h>0,$ we have that
${\mathcal S}^{(M_{p})},$ resp. ${\mathcal S}^{\{M_{p}\}},$ is a dense subspace of
${\mathcal D}_{L^{1}}((M_{p}) ),$ resp., $
{\mathcal D}_{L^{1}}(\{M_{p}\} ),$ and that $f_{| {\mathcal S}^{(M_{p})}} : {\mathcal S}^{(M_{p})} \rightarrow X,$ resp., $f_{| {\mathcal S}^{\{M_{p}\}}} : {\mathcal S}^{\{M_{p}\}} \rightarrow X,$ is a tempered $X$-valued ultradistribution of class $(M_{p}),$ resp.,
of class $\{M_{p}\}.$ The space $
{\mathcal D}_{L^{1}}^{ \prime }((M_{p}) : X),$ resp. $
{\mathcal D}_{L^{1}}^{ \prime }(\{M_{p}\} : X),$ is closed under the action of ultradifferential operators of $(M_{p})$-class, resp. $\{M_{p}\}$-class.

Assume that ${\mathrm A}\subseteq {\mathcal D}^{\prime \ast}(X).$ Following the investigation of B. Basit and H. G\"uenzler \cite{basit-duo-gue}, conducted for vector-valued distributions, we have recently introduced the following notion in \cite{mat-bilten}:
$$
\mathcal{D}^{\prime *}_{{\mathrm A}}(X):=\Bigl\{  T\in\ \mathcal{D}^{\prime *}(X) : T\ast \varphi \in {\mathrm A}\mbox{ for all }\varphi \in \mathcal{D}^{*}\Bigr\}.
$$
It is worth noting that $
\mathcal{D}^{\prime *}_{{\mathrm A}}(X)=
\mathcal{D}^{\prime *}_{{\mathbb A}\cap B}(X),$ for any set $B\subseteq L_{loc}^{1}({\mathbb R} : X)$ that contains $C^{\infty}({\mathbb R} : X),$ as well as that the set $
\mathcal{D}^{\prime *}_{{\mathrm A}}(X)$ is closed under the action of ultradifferential operators of $\ast$-class. In \cite{mat-bilten}, we have proved the following assertions:

\begin{itemize}
\item[(i)]
Suppose that there exist an ultradifferential operator $P(D)=\sum_{p=0}^{\infty}a_p D^p$
of class $(M_p)$, resp., of
class $\{M_p\},$ and $f,\ g\in \mathcal{D}^{\prime *}_{{\mathrm A}}(X)$ such that $T=P(D)f+g.$ If ${\mathrm A}$ is closed under addition, then
$T\in \mathcal{D}^{\prime *}_{{\mathrm A}}(X).$
\item[(ii)] If ${\mathrm A} \cap C({\mathbb R} : X)$ is closed under uniform convergence, $T\in
{\mathcal D}_{L^{1}}^{ \prime }((M_{p}) : X)$ and $T\ast \varphi \in {\mathrm A},$ $\varphi \in {\mathcal D}^{(M_{p})},$ then there exists $h>0$ such that for each compact set $K\subseteq {\mathbb R}$ we have $T\ast \varphi \in {\mathrm A},$ $\varphi \in \mathcal{D}^{M_p,h}_K.$
\item[(iii)] Suppose that $T\in \mathcal{D}^{\prime (M_{p})}(X)$ and there exists $h>0$ such that for each compact set $K\subseteq {\mathbb R}$ we have $T\ast \varphi \in {\mathrm A},$ $\varphi \in \mathcal{D}^{M_p,h}_K.$ If  $(M_p)$ additionally satisfies (M.3), then there exist $l>0$ and two elements
$f,\ g\in {\mathrm A}$ such that $T=P(D)f+g.$
\end{itemize}

Now we will consider the case that ${\mathrm A}=AA({\mathbb R} : X).$ Then ${\mathrm A}$ is closed under the uniform convergence and addition, and we have ${\mathrm A}\subseteq \mathcal{D}^{\prime *}_{{\mathrm A}}(X)$ (\cite{nova-mono}). Hence, as a special case of the above assertions, we have the following result:

\begin{thm}\label{mare-basit}
Let $(M_{p})$
satisfy the conditions \emph{(M.1), (M.2)} and \emph{(M.3'),} and let $T\in
{\mathcal D}^{ \prime (M_{p})}(X),$ resp., $T\in
{\mathcal D}^{ \prime \{M_{p}\}}(X).$ Then the following holds:
\begin{itemize}
\item[\emph{(i)}]
Suppose that there exist an ultradifferential operator $P(D)=\sum_{p=0}^{\infty}a_p D^p$
of class $(M_p)$, resp., of
class $\{M_p\},$ and $f,\ g\in \mathcal{D}^{\prime *}_{AA({\mathbb R} : X)}(X)$ such that $T=P(D)f+g.$ Then
$T\in \mathcal{D}^{\prime *}_{AA({\mathbb R} : X)}(X).$
\item[\emph{(ii)}] If $T\in
{\mathcal D}_{L^{1}}^{ \prime }((M_{p}) : X)$ and $T\ast \varphi \in AA({\mathbb R} : X),$ $\varphi \in {\mathcal D}^{(M_{p})},$ then there exists $h>0$ such that for each compact set $K\subseteq {\mathbb R}$ we have $T\ast \varphi \in AA({\mathbb R} : X),$ $\varphi \in \mathcal{D}^{M_p,h}_K.$
\item[\emph{(iii)}] Suppose that $T\in \mathcal{D}^{\prime (M_{p})}(X)$ and there exists $h>0$ such that for each compact set $K\subseteq {\mathbb R}$ we have $T\ast \varphi \in AA({\mathbb R} : X),$ $\varphi \in \mathcal{D}^{M_p,h}_K.$ If  $(M_p)$ additionally satisfies \emph{(M.3),} then there exist $l>0$ and two elements
$f,\ g\in AA({\mathbb R} : X)$ such that $T=P(D)f+g.$
\end{itemize}
\end{thm}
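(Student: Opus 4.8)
The plan is to obtain all three statements as specializations of the abstract assertions (i)--(iii) recalled just above from \cite{mat-bilten}, applied with $\mathrm{A}=AA({\mathbb R}:X)$; so the only thing to verify is that this particular class enjoys the structural properties those assertions presuppose. I would isolate three classical facts about almost automorphic functions (see \cite{nova-mono,diagana}): (a) $AA({\mathbb R}:X)$ is a linear subspace of $C_{b}({\mathbb R}:X)$, hence closed under addition and contained in $C({\mathbb R}:X)$; (b) $AA({\mathbb R}:X)$ is closed under uniform convergence on ${\mathbb R}$, i.e.\ a sup-norm limit of almost automorphic functions is again almost automorphic, which is seen by a diagonal procedure applied to the sequences furnished by Bochner's criterion; (c) $AA({\mathbb R}:X)\ast L^{1}({\mathbb R})\subseteq AA({\mathbb R}:X)$, whence in particular $AA({\mathbb R}:X)\ast {\mathcal D}^{\ast}\subseteq AA({\mathbb R}:X)$ and therefore $AA({\mathbb R}:X)\subseteq {\mathcal D}^{\prime \ast}_{AA({\mathbb R}:X)}(X)$.

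For part (i): by (a), $\mathrm{A}=AA({\mathbb R}:X)$ is closed under addition, so the hypothesis of abstract assertion (i) is satisfied whenever $T=P(D)f+g$ with $f,g\in {\mathcal D}^{\prime \ast}_{AA({\mathbb R}:X)}(X)$, and that assertion delivers $T\in {\mathcal D}^{\prime \ast}_{AA({\mathbb R}:X)}(X)$ at once. (The fact that $P(D)$ preserves ${\mathcal D}^{\prime \ast}_{AA({\mathbb R}:X)}(X)$, via $(P(D)f)\ast \varphi=f\ast P(D)\varphi$ with $P(D)\varphi\in {\mathcal D}^{\ast}$, is already built into abstract assertion (i).)

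For part (ii): since every almost automorphic function is continuous, $AA({\mathbb R}:X)\cap C({\mathbb R}:X)=AA({\mathbb R}:X)$, which by (b) is closed under uniform convergence; the remaining hypotheses of abstract assertion (ii) --- namely $T\in {\mathcal D}_{L^{1}}^{\prime}((M_{p}):X)$ and $T\ast \varphi\in AA({\mathbb R}:X)$ for $\varphi\in {\mathcal D}^{(M_{p})}$ --- are exactly those stated, so (ii) follows verbatim. For part (iii): one simply invokes abstract assertion (iii) with $\mathrm{A}=AA({\mathbb R}:X)$ under the additional assumption (M.3), no property of $\mathrm{A}$ beyond $AA({\mathbb R}:X)\subseteq L_{loc}^{1}({\mathbb R}:X)$ being needed, and reads off $l>0$ and $f,g\in AA({\mathbb R}:X)$ with $T=P(D)f+g$.

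Consequently there is essentially no obstacle: the substance has been carried out at the abstract level in \cite{mat-bilten}, and the present theorem is its instantiation at $\mathrm{A}=AA({\mathbb R}:X)$. If one wished to keep the argument self-contained, the only slightly delicate points to expand would be facts (b) and (c), both of which are proved by passing to a common subsequence of the sequences provided by the definition of almost automorphy and then interchanging limit and integral; all remaining steps are bookkeeping already absorbed into assertions (i)--(iii).
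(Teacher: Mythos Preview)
Your proposal is correct and mirrors the paper's own argument exactly: the theorem is obtained as a direct specialization of the abstract assertions (i)--(iii) from \cite{mat-bilten} upon taking $\mathrm{A}=AA({\mathbb R}:X)$, after noting that this class is closed under addition and uniform convergence and satisfies $AA({\mathbb R}:X)\subseteq \mathcal{D}^{\prime *}_{AA({\mathbb R}:X)}(X)$. The paper presents the theorem with precisely this one-line justification and no separate proof environment.
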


As an immediate corollary of Theorem \ref{mare-basit}, we have the following:

\begin{cor}\label{kostici-bodovi}
Let $(M_{p})$
satisfy the conditions \emph{(M.1), (M.2)} and \emph{(M.3)',} and let $T\in
{\mathcal D}_{L^{1}}^{ \prime }((M_{p}) : X),$ resp. $T\in
{\mathcal D}_{L^{1}}^{ \prime }(\{M_{p}\} : X)$. Consider now the following assertions:
\begin{itemize}
\item[\emph{(i)}] There exist a number $l>0,$ resp. a sequence $(r_{p})\in {\mathrm R},$ and two functions $f,\ g\in AA({\mathbb R} : X)$  such that $T=P_{l}(D)f+g,$ resp.  $T=P_{r_{p}}(D)f+g.$
\item[\emph{(ii)}] There exist an ultradifferential operator $P(D)=\sum_{p=0}^{\infty}a_p D^p$
of class $(M_p),$ resp. $\{M_{p}\}$, and two functions $f,\ g\in AA({\mathbb R} : X)$ such that $T=P(D)f+g.$
\item[\emph{(iii)}] We have $T\ast \varphi \in AA({\mathbb R} : X),$ $\varphi \in {\mathcal D}^{\ast}.$
\item[\emph{(iv)}] There exists $h>0$ such that for each compact set $K\subseteq {\mathbb R},$ resp. for each $h>0$ and for each compact set $K\subseteq {\mathbb R},$ we have $T\ast \varphi \in AA({\mathbb R} : X),$ $\varphi \in \mathcal{D}^{M_p,h}_K.$
\end{itemize}
Then we have \emph{(i)} $\Rightarrow$ \emph{(ii)} $\Rightarrow$ \emph{(iii)} $\Leftrightarrow$ \emph{(iv)}. Furthermore, if $(M_{p})$
additionally satisfies the condition \emph{(M.3),} then the assertions \emph{(i)}-\emph{(iv)} are equivalent for the Beurling class.
\end{cor}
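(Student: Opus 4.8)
The plan is to derive Corollary~\ref{kostici-bodovi} directly from Theorem~\ref{mare-basit} by feeding in the specific ultradifferential operators $P_{l}(D)$ and $P_{r_{p}}(D)$ introduced in Subsection~\ref{vvult}. Recall that under (M.1), (M.2) and (M.3) these operators are genuine ultradifferential operators of $(M_p)$-class, resp.\ $\{M_p\}$-class; in particular each $P_{l}$, resp.\ $P_{r_{p}}$, is of the form $\sum_{p=0}^{\infty}a_pD^p$ with coefficients admitting the required bounds, and moreover each has the convolution-inverse / parametrix property that the regularizations $P_{l}(D)E_{l}=\delta+\eta_{l}$ hold with $E_{l},\eta_{l}$ of the same class (this is the classical Komatsu construction, recalled in \cite{k91,k82}). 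So the corollary is essentially the statement that one may take the operator in Theorem~\ref{mare-basit}(iii) to be of the special product form, together with the chain of elementary implications among (i)--(iv).

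First I would prove the easy implications. The implication (i) $\Rightarrow$ (ii) is immediate: $P_{l}(D)$, resp.\ $P_{r_{p}}(D)$, is itself an ultradifferential operator of the relevant class, so a representation $T=P_{l}(D)f+g$ with $f,g\in AA({\mathbb R}:X)$ is a fortiori a representation of the type demanded in (ii). For (ii) $\Rightarrow$ (iii): given $T=P(D)f+g$ with $f,g\in AA({\mathbb R}:X)$ and any $\varphi\in{\mathcal D}^{*}$, write $T\ast\varphi=f\ast P(D)\varphi+g\ast\varphi$; since $P(D)\varphi,\varphi\in{\mathcal D}^{*}\subseteq L^{1}({\mathbb R})$ and $AA({\mathbb R}:X)\ast L^{1}({\mathbb R})\subseteq AA({\mathbb R}:X)$ (the fact $\mathcal{E}_{AA}(X)\ast L^{1}({\mathbb R})\subseteq\mathcal{E}_{AA}(X)$ already noted in Section~\ref{collado-automorphic}, or directly via \cite{nova-mono}), both summands are almost automorphic, hence so is $T\ast\varphi$. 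Alternatively, (ii) $\Rightarrow$ (iii) is nothing but Theorem~\ref{mare-basit}(i) applied with this $P(D)$, $f,g$, using $AA({\mathbb R}:X)\subseteq\mathcal{D}^{\prime *}_{AA({\mathbb R}:X)}(X)$ and then noting that membership in $\mathcal{D}^{\prime *}_{AA({\mathbb R}:X)}(X)$ for a $T\in{\mathcal D}_{L^{1}}^{\prime}$ is exactly (iii). The equivalence (iii) $\Leftrightarrow$ (iv) is the content of Theorem~\ref{mare-basit}(ii) for the forward direction (with $T\in{\mathcal D}_{L^{1}}^{\prime}((M_p):X)$, resp.\ $\{M_p\}$), while (iv) $\Rightarrow$ (iii) is trivial since every $\varphi\in{\mathcal D}^{*}$ lies in some $\mathcal{D}^{M_p,h}_{K}$ with $h$ large enough (Beurling) or in some $\mathcal{D}^{M_p,h}_{K}$ for every small $h$ (Roumieu); one must be mildly careful about the order of quantifiers in (iv) in the two cases, which is precisely why the statement is phrased with ``there exists $h>0$'' in the Beurling case and ``for each $h>0$'' in the Roumieu case, matching the conclusion of Theorem~\ref{mare-basit}(ii).

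For the final sentence, suppose in addition that $(M_p)$ satisfies (M.3) and we are in the Beurling class. Then Theorem~\ref{mare-basit}(iii) applies: from (iv) we obtain $l>0$ and $f,g\in AA({\mathbb R}:X)$ with $T=P(D)f+g$ for \emph{some} operator $P(D)$ of class $(M_p)$ --- but tracing the construction, the operator produced there is exactly of the form $P_{l}(D)$ for a suitable $l>0$ (this is how the parametrix in Komatsu's structure theorem is built), so in fact (iv) $\Rightarrow$ (i). Combined with (i) $\Rightarrow$ (ii) $\Rightarrow$ (iii) $\Leftrightarrow$ (iv) this closes the cycle and gives the equivalence of (i)--(iv). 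I would write this out by citing Theorem~\ref{mare-basit}(iii) and then observing that the $f,g$ it returns satisfy $T-P_{l}(D)f=g$ with the specific $P_{l}$.

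The main obstacle --- really the only nontrivial point --- is the last step: verifying that the operator delivered by the structural Theorem~\ref{mare-basit}(iii) can be taken to be precisely $P_{l}(D)$ (Beurling) rather than an unspecified operator of the class, i.e.\ that the Komatsu parametrix underlying that theorem is of the stated product form. This is where condition (M.3) (not merely (M.3')) is genuinely used, and it is also the reason the equivalence in the last sentence is claimed only for the Beurling class: in the Roumieu case the analogous parametrix involves a sequence $(r_p)\in{\mathrm R}$ and the corresponding closing implication (iv) $\Rightarrow$ (i) would require the Roumieu version of the structure theorem with control on $(r_p)$, which is not asserted in Theorem~\ref{mare-basit}. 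All the remaining implications are soft and follow by unwinding definitions and using the invariance $AA({\mathbb R}:X)\ast L^{1}({\mathbb R})\subseteq AA({\mathbb R}:X)$ together with the fact that ${\mathcal D}^{*}$ is dense in ${\mathcal D}_{L^{1}}((M_p))$, resp.\ ${\mathcal D}_{L^{1}}(\{M_p\})$.
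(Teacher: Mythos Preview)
Your argument is correct and matches the paper's approach: the corollary is presented there as an immediate consequence of Theorem~\ref{mare-basit}, and you have simply unpacked the implications (i)$\Rightarrow$(ii)$\Rightarrow$(iii)$\Leftrightarrow$(iv) and the closing step (iv)$\Rightarrow$(i) under (M.3) exactly as intended. One small clarification: the ``obstacle'' you flag at the end is not really an obstacle, since the statement of Theorem~\ref{mare-basit}(iii) already produces a number $l>0$ together with $f,g\in AA({\mathbb R}:X)$; the $P(D)$ appearing there is precisely $P_{l}(D)$ (the paper's notation is slightly loose), so no additional tracing through Komatsu's parametrix construction is needed to close the cycle.
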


Let us introduce the following space
$$
\mathcal{E}^{*}_{AA}(X):=\Biggl\{ \phi \in \mathcal{E}^*(X) : \phi^{(i)} \in AA({\mathbb R} : X)\mbox{ for all }i\in {{\mathbb N}_{0}}\Biggr\}.
$$
As in distribution case,
$\mathcal{E}^{*}_{AA}(X) \subseteq {\mathcal D}_{L^{1}}^{ \prime \ast }( X),$ $\mathcal{E}^{*}_{AA}(X)=\mathcal{E}^{*}(X) \cap AA({\mathbb R} : X)$ and $\mathcal{E}^{*}_{AA}(X) \ast L^{1}({\mathbb R}) \subseteq \mathcal{E}^{*}_{AA}(X);$ furthermore, $\mathcal{E}^{*}_{AA}(X)$ is the space of those elements $f(\cdot)$ from $\mathcal{E}^*(X)$
for which $f\ast \varphi \in AA({\mathbb R} : X),$ $\varphi \in {\mathcal D}^{*}.$

Consider now the following statement:
\begin{itemize}
\item[(ii)':] $T\in
{\mathcal D}_{L^{1}}^{ \prime }((M_{p}) : X),$ resp. $T\in
{\mathcal D}_{L^{1}}^{ \prime }(\{M_{p}\} : X),$ and there exists a sequence $(\phi_{n})$ in $\mathcal{E}^{*}_{AA}(X)$ such that
$\lim_{n\rightarrow \infty}\phi_{n}=T$ for the topology of ${\mathcal D}_{L^{1}}^{ \prime }((M_{p}) : X),$ resp. $
{\mathcal D}_{L^{1}}^{ \prime }(\{M_{p}\} : X).$
\end{itemize}

The proof of following proposition is almost the same as that of \cite[Lemma 1]{mat-bilten}:

\begin{prop}\label{tom-waits-automorphic}
Let $(M_{p})$
satisfy the conditions \emph{(M.1), (M.2)} and \emph{(M.3'),} and let $T\in
{\mathcal D}_{L^{1}}^{ \prime }((M_{p}) : X),$ resp.  $T\in
{\mathcal D}_{L^{1}}^{ \prime }(\{M_{p}\} : X)$. Then we have \emph{(iii)} $\Leftrightarrow$ \emph{(ii)'},
with \emph{(iii)} being the same as in the formulation of \emph{Corollary \ref{kostici-bodovi}}.
\end{prop}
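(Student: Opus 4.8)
The plan is to establish the two implications (iii) $\Rightarrow$ (ii)$'$ and (ii)$'$ $\Rightarrow$ (iii) separately, following the template of \cite[Lemma 1]{mat-bilten} but with $AA({\mathbb R}:X)$ in place of the almost periodic class. The key structural inputs are: that $AA({\mathbb R}:X)$ is closed under uniform convergence and under convolution with $L^{1}({\mathbb R})$; that $\mathcal{E}^{*}_{AA}(X)$ consists precisely of those $f\in\mathcal{E}^{*}(X)$ with $f\ast\varphi\in AA({\mathbb R}:X)$ for all $\varphi\in{\mathcal D}^{*}$; and the continuity of the convolution maps $\varphi\mapsto\cdot\ast\varphi:{\mathcal D}_{L^{1}}^{\prime *}(X)\to\mathcal{E}^{*}(X)$ and $g\ast\cdot:{\mathcal D}_{L^{1}}^{\prime *}(X)\to{\mathcal D}_{L^{1}}^{\prime *}(X)$ recalled in Subsection \ref{vvult}.

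For (ii)$'$ $\Rightarrow$ (iii): suppose $\phi_{n}\to T$ in ${\mathcal D}_{L^{1}}^{\prime *}(X)$ with each $\phi_{n}\in\mathcal{E}^{*}_{AA}(X)$. Fix $\varphi\in{\mathcal D}^{*}$. By continuity of convolution on ${\mathcal D}_{L^{1}}^{\prime *}(X)$, $\phi_{n}\ast\varphi\to T\ast\varphi$ in $\mathcal{E}^{*}(X)$, hence in particular uniformly on compact sets; one upgrades this to uniform convergence on all of ${\mathbb R}$ using that each $\phi_{n}$ lies in ${\mathcal D}_{L^{1}}$ (indeed in ${\mathcal D}_{L^{\infty}}$) together with the estimate $\|(\phi_{n}-\phi_{m})\ast\varphi\|_{L^{\infty}}\le\|\phi_{n}-\phi_{m}\|_{{\mathcal D}_{L^{1}}^{\prime *}}\|\varphi\|_{*}$-type bound coming from the strong topology. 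Since $\phi_{n}\in\mathcal{E}^{*}_{AA}(X)$ gives $\phi_{n}\ast\varphi\in AA({\mathbb R}:X)$, and $AA({\mathbb R}:X)$ is closed under uniform limits, we conclude $T\ast\varphi\in AA({\mathbb R}:X)$, which is (iii).

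For (iii) $\Rightarrow$ (ii)$'$: assume $T\ast\varphi\in AA({\mathbb R}:X)$ for all $\varphi\in{\mathcal D}^{*}$. Choose a regularizing sequence $(\rho_{n})$ in ${\mathcal D}^{*}$ — an approximate identity of $*$-class, which exists under (M.1), (M.2), (M.3$'$) — and set $\phi_{n}:=T\ast\rho_{n}$. Then $\phi_{n}\in\mathcal{E}^{*}(X)$, and for any $\psi\in{\mathcal D}^{*}$ we have $\phi_{n}\ast\psi=T\ast(\rho_{n}\ast\psi)$ with $\rho_{n}\ast\psi\in{\mathcal D}^{*}$, so $\phi_{n}\ast\psi\in AA({\mathbb R}:X)$; hence $\phi_{n}\in\mathcal{E}^{*}_{AA}(X)$ by the characterization of that space recalled just above the statement. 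It remains to check $\phi_{n}\to T$ in ${\mathcal D}_{L^{1}}^{\prime *}(X)$, i.e. $\langle T\ast\rho_{n},\chi\rangle\to\langle T,\chi\rangle$ uniformly for $\chi$ in bounded subsets of ${\mathcal D}_{L^{1}}((M_{p}))$, resp. ${\mathcal D}_{L^{1}}(\{M_{p}\})$; this follows from $\langle T\ast\rho_{n},\chi\rangle=\langle T,\check{\rho}_{n}\ast\chi\rangle$ and the fact that $\check{\rho}_{n}\ast\chi\to\chi$ in ${\mathcal D}_{L^{1}}((M_{p}))$, resp. ${\mathcal D}_{L^{1}}(\{M_{p}\})$, uniformly on bounded sets, which is a standard property of approximate identities in these Banach/(LB) spaces.

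The main obstacle I anticipate is the regularization step in (iii) $\Rightarrow$ (ii)$'$: one must verify that the mollified distributions $T\ast\rho_{n}$ converge to $T$ precisely in the \emph{strong} topology of the bounded-ultradistribution space ${\mathcal D}_{L^{1}}^{\prime *}(X)$ (not merely in ${\mathcal D}^{\prime *}(X)$), which requires the convergence $\check{\rho}_{n}\ast\chi\to\chi$ to be uniform over bounded subsets of ${\mathcal D}_{L^{1}}((M_{p}))$, resp. ${\mathcal D}_{L^{1}}(\{M_{p}\})$ — a point where the conditions (M.1), (M.2), (M.3$'$) and the precise projective/inductive limit structure of these spaces enter. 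In the Roumieu case one must additionally be careful that the single index $h$ governing $\rho_{n}$ and the action on $\chi$ can be matched; this is exactly the kind of bookkeeping already carried out in \cite[Lemma 1]{mat-bilten}, so I would invoke that argument essentially verbatim, indicating only the substitution of $AA({\mathbb R}:X)$ for the almost periodic class and the use of its closedness under uniform convergence.
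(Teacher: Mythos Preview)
Your proposal is correct and follows exactly the approach the paper indicates: the paper gives no explicit proof but simply remarks that the argument is almost the same as that of \cite[Lemma 1]{mat-bilten}, and your two implications---mollification by an approximate identity in ${\mathcal D}^{*}$ for (iii) $\Rightarrow$ (ii)$'$, and closedness of $AA({\mathbb R}:X)$ under uniform limits together with the boundedness of $\{\varphi(t-\cdot):t\in{\mathbb R}\}$ in ${\mathcal D}_{L^{1}}(*)$ for (ii)$'$ $\Rightarrow$ (iii)---carry out precisely that substitution. The only minor slip is that $\phi_{n}\in\mathcal{E}^{*}_{AA}(X)$ lies in ${\mathcal D}_{L^{\infty}}$ rather than ${\mathcal D}_{L^{1}}$, but this does not affect the argument since the uniform convergence follows directly from strong convergence in ${\mathcal D}_{L^{1}}^{\prime *}(X)$ applied to the translation-bounded family $\{\varphi(t-\cdot)\}$.
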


It is said that a bounded ultradistribution $T\in
{\mathcal D}_{L^{1}}^{ \prime }((M_{p}) : X),$ resp. $T\in
{\mathcal D}_{L^{1}}^{ \prime }(\{M_{p}\} : X),$ is almost automorphic iff $T$ satisfies any of the above two equivalent conditions. It can be simply verified that a regular distribution (ultradistribution of $\ast$-class) determined by an almost automorphic vector-valued function that is not almost periodic is an  almost automorphic vector-valued distribution (ultradistribution of $\ast$-class) that cannot be almost periodic (cf. \cite[Example 2]{buzar-tress}).

Now we would like to state the following result:

\begin{thm}\label{ultra-automorphic-ras}
Let $(M_{p})$
satisfy the conditions \emph{(M.1), (M.2)} and \emph{(M.3'),} and let $T\in
{\mathcal D}_{L^{1}}^{ \prime }((M_{p}) : X),$ resp.  $T\in
{\mathcal D}_{L^{1}}^{ \prime }(\{M_{p}\} : X)$. Then we have \emph{(i)} $\Rightarrow$ \emph{(ii)} $\Rightarrow$ \emph{(iii)} $\Rightarrow$ \emph{(iv)}, where:
\begin{itemize}
\item[\emph{(i)}] There exist an ultradifferential operator $P(D)=\sum_{p=0}^{\infty}a_p D^p$
of class $(M_p),$ resp. $\{M_{p}\}$, and two functions $f,\ g\in AA({\mathbb R} : X)$ such that $T=P(D)f+g.$
\item[\emph{(ii)}] For every real sequence $(b_{n}),$ there exist a subsequence $(a_{n})$ of $(b_{n})$ and a vector-valued ultradistribution $S \in {\mathcal D}^{\prime \ast}(X)$ such that $\lim_{n\rightarrow \infty}\langle T_{a_{n}},\varphi \rangle=\langle S,\varphi \rangle,$ $\varphi \in {\mathcal D}^{\ast}$ and $\lim_{n\rightarrow \infty}\langle S_{-a_{n}},\varphi \rangle=\langle T, \varphi \rangle,$ $\varphi \in {\mathcal D}^{\ast}.$
\item[\emph{(iii)}] For every real sequence $(b_{n}),$ there exists a subsequence $(a_{n})$ of $(b_{n})$ such that $\lim_{l\rightarrow \infty}\lim_{k\rightarrow \infty}\langle \tau_{-a_{l}}\tau_{a_{k}}T , \varphi \rangle=\langle T, \varphi \rangle,$ $\varphi \in {\mathcal D}^{\ast}.$
\item[\emph{(iv)}] We have $T\ast \varphi \in AA({\mathbb R} : X),$ $\varphi \in {\mathcal D}^{\ast}.$
\end{itemize}
Furthermore, if $(M_{p})$
additionally satisfies the condition \emph{(M.3),} then the assertions \emph{(i)}-\emph{(iv)} are equivalent for the Beurling class.
\end{thm}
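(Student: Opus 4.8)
The plan is to prove the implications (i) $\Rightarrow$ (ii) $\Rightarrow$ (iii) $\Rightarrow$ (iv) one at a time, and then, in the Beurling class under the additional hypothesis (M.3), to close the cycle with Theorem~\ref{mare-basit}. I will use throughout that the formal transpose $\check P(D):=\sum_{p}(-1)^{p}a_{p}D^{p}$ of an ultradifferential operator $P(D)=\sum_{p}a_{p}D^{p}$ of $\ast$-class is again of $\ast$-class, maps $\mathcal D^{\ast}$ continuously into itself without enlarging supports, commutes with translations, and satisfies $\langle P(D)u,\varphi\rangle=\langle u,\check P(D)\varphi\rangle$ for $u\in\mathcal D^{\prime\ast}(X)$, $\varphi\in\mathcal D^{\ast}$. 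For (i) $\Rightarrow$ (ii): given a real sequence $(b_{n})$, I would apply almost automorphy to $f$ along $(b_{n})$ and then to $g$ along the extracted subsequence, so that a diagonal argument furnishes a single subsequence $(a_{n})$ and bounded strongly measurable functions $f_{1},g_{1}:{\mathbb R}\to X$ (hence regular ultradistributions) with $f(\cdot+a_{n})\to f_{1}$, $g(\cdot+a_{n})\to g_{1}$ and $f_{1}(\cdot-a_{n})\to f$, $g_{1}(\cdot-a_{n})\to g$ pointwise. Setting $S:=P(D)f_{1}+g_{1}\in\mathcal D^{\prime\ast}(X)$, for $\varphi\in\mathcal D^{\ast}$ I would write $\psi:=\check P(D)\varphi\in\mathcal D^{\ast}$ and change variables to obtain
\[
\langle T_{a_{n}},\varphi\rangle=\int_{\mathbb R}f(s+a_{n})\psi(s)\,ds+\int_{\mathbb R}g(s+a_{n})\varphi(s)\,ds ;
\]
the Bochner dominated convergence theorem (domination by the integrable functions $\|f\|_{\infty}|\psi|$ and $\|g\|_{\infty}|\varphi|$) then gives $\langle T_{a_{n}},\varphi\rangle\to\langle f_{1},\psi\rangle+\langle g_{1},\varphi\rangle=\langle S,\varphi\rangle$, and the same computation with $(f,g)$ and $(f_{1},g_{1})$ interchanged gives $\langle S_{-a_{n}},\varphi\rangle\to\langle T,\varphi\rangle$. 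It is important that $(a_{n})$ is selected before $\varphi$ is chosen, hence works for all test functions at once.

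For (ii) $\Rightarrow$ (iii) I expect only a short computation: keeping $(a_{n})$ and $S$ from (ii) and using $\tau_{-a_{l}}\tau_{a_{k}}T=T_{a_{k}-a_{l}}$, one has $\langle\tau_{-a_{l}}\tau_{a_{k}}T,\varphi\rangle=\langle T_{a_{k}},\varphi(\cdot+a_{l})\rangle$ for $\varphi\in\mathcal D^{\ast}$; letting $k\to\infty$ yields $\langle S,\varphi(\cdot+a_{l})\rangle=\langle S_{-a_{l}},\varphi\rangle$, and then $l\to\infty$ yields $\langle T,\varphi\rangle$. For (iii) $\Rightarrow$ (iv), I would fix $\varphi\in\mathcal D^{\ast}$ and a real sequence $(b_{n})$, take the subsequence $(a_{n})$ from (iii), and note that for each fixed $x\in{\mathbb R}$ the function $\varphi(x-\cdot)$ lies in $\mathcal D^{\ast}$ and $\langle\tau_{-a_{l}}\tau_{a_{k}}T,\varphi(x-\cdot)\rangle=(T\ast\varphi)(x+a_{k}-a_{l})$, so (iii) specializes to
\[
\lim_{l\to\infty}\lim_{k\to\infty}(T\ast\varphi)(x+a_{k}-a_{l})=(T\ast\varphi)(x),\qquad x\in{\mathbb R}.
\]
Since $T\ast\varphi\in\mathcal E^{\ast}(X)$ is continuous and bounded, the mere existence of the inner limit for every $l$ forces $g_{\varphi}(s):=\lim_{k\to\infty}(T\ast\varphi)(s+a_{k})$ to exist for every $s\in{\mathbb R}$ (take $s=x-a_{l}$ with $l$ fixed and $x$ arbitrary), and then the outer limit reads $\lim_{l\to\infty}g_{\varphi}(x-a_{l})=(T\ast\varphi)(x)$; these are precisely the defining relations for $T\ast\varphi\in AA({\mathbb R}:X)$ along $(a_{n})$, so (iv) follows.

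For the Beurling class under (M.3) it only remains to prove (iv) $\Rightarrow$ (i). Since $T\in\mathcal D_{L^{1}}^{\prime}((M_{p}):X)$ and $T\ast\varphi\in AA({\mathbb R}:X)$ for all $\varphi\in\mathcal D^{(M_{p})}$, Theorem~\ref{mare-basit}(ii) produces $h>0$ such that $T\ast\varphi\in AA({\mathbb R}:X)$ for all $\varphi\in\mathcal D^{M_{p},h}_{K}$ and all compact $K\subseteq{\mathbb R}$, and Theorem~\ref{mare-basit}(iii), which is the step that uses (M.3), then yields $l>0$ and $f,g\in AA({\mathbb R}:X)$ with $T=P(D)f+g$, i.e.\ (i). Combined with the implications already established, this gives the equivalence of (i)--(iv) in the Beurling case.

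The main obstacle here is conceptual rather than computational: it is the observation driving (iii) $\Rightarrow$ (iv) that, once one tests (iii) against the translated and reflected functions $\varphi(x-\cdot)$, the resulting iterated-limit identity is already the pointwise double-limit characterization of almost automorphy of the bounded continuous function $T\ast\varphi$, so no compactness or equicontinuity argument is needed to extract the limit function. Everything else reduces to routine bookkeeping with translations and the transpose of $P(D)$, an application of the Bochner dominated convergence theorem, and the invocation of the structural result Theorem~\ref{mare-basit}.
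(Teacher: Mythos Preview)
Your proof is correct and follows essentially the same approach as the paper's: the implication (i) $\Rightarrow$ (ii) via dominated convergence applied to the integral pairings with $f_{1},g_{1}$ playing the role of the paper's $F,G$ and $S=P(D)f_{1}+g_{1}$, the routine reductions (ii) $\Rightarrow$ (iii) $\Rightarrow$ (iv) (which the paper simply cites from the distribution case), and the closing of the cycle under (M.3) via Theorem~\ref{mare-basit}(ii)--(iii). Your treatment is in fact somewhat more explicit than the paper's, particularly in spelling out how the iterated-limit identity in (iii), tested against $\varphi(x-\cdot)$, yields the pointwise double-limit characterization of almost automorphy for $T\ast\varphi$.
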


\begin{proof}
The proof of (ii) $\Rightarrow$ (iii) $\Rightarrow$ (iv) can be deduced as in distribution case (see e.g. the proof of \cite[Proposition 9]{buzar-tress}). For the proof of implication (i) $\Rightarrow$ (ii), observe that the almost automorphy of functions $f(\cdot)$ and $g(\cdot)$ implies the existence of essentially bounded functions $F\in L^{\infty}({\mathbb R} : X)$ and  $G\in L^{\infty}({\mathbb R} : X)$ such that
\begin{align*}
\lim_{n\rightarrow \infty}f\bigl( t+a_{n}\bigr)=F(t)\ \mbox{ and } \  \lim_{n\rightarrow \infty}F\bigl( t-a_{n}\bigr)=f(t)
\end{align*}
and
\begin{align*}
\lim_{n\rightarrow \infty}g\bigl( t+a_{n}\bigr)=G(t)\ \mbox{ and } \  \lim_{n\rightarrow \infty}G\bigl( t-a_{n}\bigr)=g(t),
\end{align*}
pointwise for $t\in {\mathbb R}.$ Using these equations, the dominated convergence theorem and the fact that, for every bounded subset $B$ of ${\mathcal D}^{\ast}$ and for every compact set $K\subseteq {\mathbb R},$ there exists $h>0$ such that $B$ is bounded in $\mathcal{D}^{M_p,h}_K$ (\cite{k91}), it readily follows that $\lim_{n\rightarrow \infty}f(\cdot +a_{n})=F$ in ${\mathcal D}^{\prime \ast}(X)$
and $\lim_{n\rightarrow \infty}g(\cdot +a_{n})=G$ in ${\mathcal D}^{\prime \ast}(X),$ so that $\lim_{n\rightarrow \infty}\langle T_{a_{n}},\varphi \rangle = \langle S,\varphi \rangle,$ where $S\in {\mathcal D}^{\prime \ast}(X)$ is given by $S:=P(D)F+G.$ Similarly we can deduce that $\lim_{n\rightarrow \infty}\langle S_{-a_{n}},\varphi \rangle=\langle T, \varphi \rangle,$ $\varphi \in {\mathcal D}^{\ast},$ finishing the proof of theorem.
\end{proof}

In the present situation, we cannot tell whether the implication (iv) $\Rightarrow$ (ii) holds true in general case.

In \cite[Section 6]{buzar-tress}, C. Bouzar and F. Z. Tchouar have continued the analysis of S. Bochner \cite{bochner} concerning linear difference-differential operators
$$
L_{h}=\sum_{i=0}^{p}\sum_{j=0}^{q}a_{ij}\frac{d^{i}}{dx^{i}}\tau_{h_{j}},
$$
where $a_{ij}$ are complex numbers ($0\leq i\leq p,$ $0\leq j\leq q$) and $h=(h_{j})_{0\leq j\leq q}\subseteq {\mathbb R}^{q}$. Taking into account the fact that \cite[Theorem 4(i)]{bochner} holds in vector-valued case, \cite[Theorem 1, Theorem 3]{mat-bilten} and the proof of \cite[Theorem 3]{buzar-tress}, we can simply clarify that the assertions of \cite[Theorem 3, Corollary 3]{buzar-tress} hold for vector-valued distributions, as well as for vector-valued ultradistributions:

\begin{thm}\label{baobab}
Let $C^{p}_{buc}({\mathbb R} :X)$ denote the vector space of all $p$-times differentiable uniformly continuous functions $f\in BUC({\mathbb R} : X)$ for which $f^{(j)}\in BUC({\mathbb R} : X),$ $0\leq j\leq p.$ Let $S\in
{\mathcal D}_{L^{1}}^{ \prime }(\{M_{p}\} : X),$ resp. $S\in
{\mathcal D}_{L^{1}}^{ \prime }(\{M_{p}\} : X),$ be almost automorphic.
\begin{itemize}
\item[\emph{(i)}] If every solution $f\in C^{p}_{buc}({\mathbb R} :X)$ of the homogeneous equation $L_{h}f=0$ is almost automorphic, then every solution $T\in
{\mathcal D}_{L^{1}}^{ \prime }((M_{p}) : X),$ resp. $T\in
{\mathcal D}_{L^{1}}^{ \prime }(\{M_{p}\} : X),$ of the
inhomogeneous equation $L_{h}T=S$ is almost automorphic.
\item[\emph{(ii)}] If $S^{\prime}$ is almost automorphic, then $S$ is almost automorphic.
\item[\emph{(iii)}] Any translation $S_{h}$ of $S$ is almost automorphic ($h\in {\mathbb R}$).
\end{itemize}
\end{thm}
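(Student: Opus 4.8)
The plan is to follow the blueprint of the proof of \cite[Theorem 3, Corollary 3]{buzar-tress} (and of its almost periodic ultradistributional counterpart \cite[Theorem 3]{mat-bilten}), replacing ``almost periodic'' by ``almost automorphic'' throughout and feeding in the vector-valued form of \cite[Theorem 4(i)]{bochner}. Concretely, for (i) I would fix $\varphi \in {\mathcal D}^{\ast}$ and pass to the regularization $f_{\varphi}:=T\ast \varphi$. Since $T$ is a bounded ultradistribution, $f_{\varphi}\in {\mathcal E}^{\ast}(X);$ moreover, writing $f_{\varphi}(x)=\langle T,\varphi(x-\cdot)\rangle$ and using that the family $\{\varphi(x-\cdot):x\in {\mathbb R}\}$ is bounded and uniformly equicontinuous in ${\mathcal D}_{L^{1}}((M_{p})),$ resp. in ${\mathcal D}_{L^{1}}(\{M_{p}\}),$ one obtains $f_{\varphi}^{(j)}=T\ast \varphi^{(j)}\in BUC({\mathbb R} : X)$ for every $j\in {\mathbb N}_{0},$ hence $f_{\varphi}\in C^{p}_{buc}({\mathbb R} : X).$ Because $L_{h}$ commutes with convolution by $\varphi,$ we have $L_{h}f_{\varphi}=(L_{h}T)\ast \varphi=S\ast \varphi,$ and $S\ast \varphi \in AA({\mathbb R}:X)$ since $S$ is an almost automorphic bounded ultradistribution. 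Thus $f_{\varphi}$ is a $C^{p}_{buc}$-solution of $L_{h}f=S\ast \varphi$ with almost automorphic right-hand side, and combining the hypothesis that the $C^{p}_{buc}$-solutions of $L_{h}f=0$ are almost automorphic with the vector-valued version of \cite[Theorem 4(i)]{bochner} gives $f_{\varphi}=T\ast \varphi \in AA({\mathbb R}:X).$ As $\varphi$ was arbitrary, $T\ast \varphi \in AA({\mathbb R}:X)$ for every $\varphi \in {\mathcal D}^{\ast},$ which is precisely the defining property of an almost automorphic bounded ultradistribution (condition (iii) in Corollary \ref{kostici-bodovi} and Proposition \ref{tom-waits-automorphic}).

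For (ii) and (iii) I would argue by specialization and by translation invariance of $AA({\mathbb R}:X).$ Part (ii) is (i) applied with $p=1,$ $q=0$ and $L_{h}=d/dx$: the $C^{1}_{buc}$-solutions of $f'=0$ are the constant functions, which are almost automorphic, so if $S'$ is almost automorphic then $S,$ being a solution of $L_{h}T=S',$ is almost automorphic. For (iii), fix $h\in {\mathbb R}$ and $\varphi \in {\mathcal D}^{\ast};$ then $S_{h}\ast \varphi=(S\ast \varphi)(\cdot-h),$ and since $S\ast \varphi \in AA({\mathbb R}:X)$ and $AA({\mathbb R}:X)$ is invariant under translations, $S_{h}\ast \varphi \in AA({\mathbb R}:X);$ as $S_{h}\in {\mathcal D}_{L^{1}}^{\prime \ast}(X)$ too, the translate $S_{h}$ is almost automorphic.

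The one step that is not mere bookkeeping is the function-level input \cite[Theorem 4(i)]{bochner}: I would need to verify that Bochner's successive-subsequence-extraction argument, which manufactures an almost automorphic solution of the inhomogeneous equation out of the almost automorphy of the data and of the homogeneous solutions, uses nothing about the scalars beyond completeness and boundedness of the ambient space, and therefore transfers verbatim to $X$-valued functions. Granting this, the remaining ingredients --- the regularization calculus for bounded ultradistributions recalled in Subsection \ref{vvult}, the commutation of $L_{h}$ with convolution by $\varphi$, and the characterizations of almost automorphic bounded ultradistributions in Corollary \ref{kostici-bodovi} and Proposition \ref{tom-waits-automorphic} --- are routine, and the argument is insensitive to whether one works in the Beurling or the Roumieu class.
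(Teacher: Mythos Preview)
Your proposal is correct and follows exactly the route the paper indicates: the paper does not give a detailed argument but merely states that the result follows by combining the vector-valued validity of \cite[Theorem 4(i)]{bochner}, \cite[Theorem 1, Theorem 3]{mat-bilten}, and the proof of \cite[Theorem 3]{buzar-tress}, which is precisely the regularize--commute--apply Bochner scheme you spell out. Your write-up is in fact more explicit than the paper's own justification; the only cosmetic point is a sign in the translation identity (with the paper's convention $\langle T_{h},\varphi\rangle=\langle T,\varphi(\cdot-h)\rangle$ one gets $(S_{h}\ast\varphi)(x)=(S\ast\varphi)(x+h)$), which does not affect the argument.
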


We close the paper with the observation that the assertions of \cite[Theorem 4, Corollary 4]{buzar-tress} can be also formulated for vector-valued distributions and vector-valued ultradistributions.

\section*{Acknowledgement}
The authors are partially supported by
grant 174024 of Ministry of Science and Technological Development,
Republic of Serbia.


\begin{thebibliography}{99}

\bibitem{basit-duo-gue}
Basit, B., G\"uenzler, H.,
Generalized vector valued almost periodic
and ergodic distributions,
J. Math. Anal. Appl. 314 (2006), 363--381.

\bibitem{bochner}
Bochner, S.,
A  new approach to almost periodicity,
Proc.  Nat.  Acad.  Sci.
U.S.A. 48 (1962), 2039--2043.

\bibitem{buzar-tres}
Bouzar, C.,  Khalladi, M. T., Tchouar, F. Z.,
Almost automorphic generalized functions,
Novi Sad J. Math. 45 (2015), 207--214.

\bibitem{buzar-tress}
Bouzar, C., Tchouar, F. Z.,
Almost automorphic distributions,
Mediterr. J. Math. 14:151 (2017),  https://doi.org/10.1007/s00009-017-0953-3.

\bibitem{ioana}
Cioranescu, I.,
On the abstract Cauchy problem in spaces
of almost periodic distributions,
J. Math. Anal. Appl. 148 (1990), 440-462.

\bibitem{ioana-tres}
Cioranescu, I.,
The characterization of the almost periodic ultradistributions of
Beurling type,
Proc. Amer. Math. Soc. 116 (1992), 127--134.

\bibitem{diagana}
Diagana, T., Almost Automorphic Type and Almost Periodic Type Functions in Abstract Spaces. Springer-Verlag, New York, 2013.

\bibitem{fatajou}
Fatajou, S., Van Minh, N., Gu\'er\'ekata, G. M. N', Pankov, A.,
Stepanov-like almost automorphic solutions for nonautonomous evolution equations, Electron. J. Differential
Equations 121 (2007), 1--11.

\bibitem{gomezinjo}
G\' omez-Collado, M. C.,
Almost periodic ultradistributions of Beurling and of
Roumieu type,
Proc. Amer. Math. Soc. 129 (2000), 2319--2329.

\bibitem{gaston}
Gu\' er\' ekata, G.M.N', Almost Automorphic and Almost Periodic Functions
in Abstract Spaces. Kluwer Acad. Publ., Dordrecht, 2001.

\bibitem{k91}
Komatsu, H.,
Ultradistributions, I. Structure theorems and a characterization,
J. Fac. Sci. Univ. Tokyo Sect. IA Math. 20 (1973), 25--105.

\bibitem{k911}
Komatsu, H.,
Ultradistributions, II. The kernel theorem and ultradistributions with support in a manifold,
J. Fac. Sci. Univ. Tokyo Sect. IA Math. 24 (1977), 607--628.

\bibitem{k82}
Komatsu, H.,
Ultradistributions, III. Vector valued ultradistributions. The theory of kernels,
J. Fac. Sci. Univ. Tokyo Sect. IA Math. 29 (1982), 653--718.

\bibitem{nova-mono}
Kosti\'c, M.,
Almost Periodic and Almost Automorphic Type Solutions of Abstract Volterra Integro-Differential Equations.
Book Manuscript, 2017.

\bibitem{mat-bilten}
Kosti\'c, M.,
Vector-valued almost periodic ultradistributions and their generalizations,
Mat. Bilten, submitted.

\bibitem{pilip}
Pilipovi\'c, S.,
Tempered ultradistributions,
Boll. Un. Mat. Ital. 7 (2-B) (1998), 235--251.

\bibitem{sch16}
Schwartz, L.,
Theorie des Distributions, 2 vols.
Hermann, Paris, 1950-1951.

\bibitem{veech}
Veech, W. A.,
Almost automorphic functions on groups,
Amer. J. Math. 87 (1965), 719--751.

\bibitem{veech-prim}
Veech, W. A.,
On a theorem of Bochner,
Ann. of Math. 86 (1967), 117--137.

\end{thebibliography}
\end{document}